\def\pmod #1{\ ({\rm{mod}}\ #1)}
\def\Z{\Bbb Z}
\def\Q{\Bbb Q}
\def\bg{\bigg}
\def\({\bg(}
\def\){\bg)}
\def\sgn{{\rm sgn}}
\def\sgn{{\rm sgn}}
\def\Arg{{\rm Arg}}
\def\ve{\varepsilon}
\def\Ack{\medskip\noindent {\bf Acknowledgments}}
\theoremstyle{plain}
\newtheorem{theorem}{Theorem}
\newtheorem{lemma}{Lemma}
\newtheorem{corollary}{Corollary}
\theoremstyle{definition}
\theoremstyle{remark}
\newtheorem{remark}{Remark}
\begin{document}
 \baselineskip=17pt
\hbox{} {}
\medskip
\title[Quadratic residues and related permutations ]
{Quadratic residues and related permutations }
\date{}
\author[Hai-Liang Wu] {Hai-Liang Wu}

\thanks{2010 {\it Mathematics Subject Classification}.
Primary 11A15; Secondary 05A05, 11R18.
\newline\indent {\it Keywords}. quadratic residues, permutations, primitive roots, cyclotomic fields.
\newline \indent Supported by the National Natural Science
Foundation of China (Grant No. 11571162).}

\address {(Hai-Liang Wu) Department of Mathematics, Nanjing
University, Nanjing 210093, People's Republic of China}
\email{{\tt whl.math@smail.nju.edu.cn}}

\begin{abstract}
Let $p$ be an odd prime. For any $p$-adic integer $a$ we let $\overline{a}$ denote the unique integer $x$ with $-p/2<x<p/2$ and $x-a$
divisible by $p$. In this paper we study some permutations involving quadratic residues modulo $p$. For instance, we consider the following three sequences.
\begin{align*}
&A_0: \overline{1^2},\ \overline{2^2},\ \cdots,\ \overline{((p-1)/2)^2},\\
&A_1: \overline{a_1},\ \overline{a_2},\ \cdots,\ \overline{a_{(p-1)/2}},\\
&A_2: \overline{g^2},\ \overline{g^4},\ \cdots,\ \overline{g^{p-1}},
\end{align*}
where $g\in\Z$ is a primitive root modulo $p$ and $1\le a_1<a_2<\cdots<a_{(p-1)/2}\le p-1$ are all quadratic residues modulo $p$.
Obviously $A_i$ is a permutation of $A_j$ and we call this permutation $\sigma_{i,j}$. Sun obtained the sign of $\sigma_{0,1}$ when
$p\equiv 3\pmod4$. In this paper we give the sign of $\sigma_{0,1}$ and determine the sign $\sigma_{0,2}$
when $p\equiv 1\pmod 4$.
\end{abstract}

\maketitle

\section{Introduction}
\setcounter{lemma}{0}
\setcounter{theorem}{0}
\setcounter{corollary}{0}
\setcounter{remark}{0}
\setcounter{equation}{0}
\setcounter{conjecture}{0}
\setcounter{proposition}{0}
Let $S$ be a finite set, and let $\tau$ be a permutation on $S$. Throughout this paper the sign of $\tau$ is denoted by $\sgn(\tau)$.
Investigating the properties of $\tau$ is a classical topic in
number theory and combinatorics. In particular, when $S$ is the finite field $\mathbb{F}_p$ with $p>2$ prime and $\tau$ is induced by
a
permutation polynomial $f(x)$ over $\mathbb{F}_p$, the properties of $\tau$ have been studied deeply by many mathematicians.

For instance, when $f(x)=ax$ with $a\in\Z$ and $\gcd(a,p)=1$, the famous Zolotarev's Theorem \cite{Z} states that the Legendre symbol
$(\frac{a}{p})$ is the sign of the permutation of $\Z/p\Z$ induced by multiplication by $a$. Using this, Zolotarev obtained a new
proof of the law of quadratic reciprocity. In this line, Sun \cite{S} investigated many permutation problems involving quadratic
permutation polynomial over $\mathbb{F}_p$. When $k\in\Z^{+}$ and $\gcd(k,p-1)=1$, it is easy to see that $f(x)=x^k$ is a permutation
polynomial over $\mathbb{F}_p$. L.-Y Wang and the author
\cite{WW} determined the sign of permutation $\tau$ induced by $f(x)$. Moreover, W. Duke and K. Hopkins \cite{DH} extended this topic to
an arbitrary finite group. They also generalized the law of quadratic reciprocity on finite groups.

Given an odd prime $p$. Throughout this paper, for any integer $a$ we let $\overline{a}$ denote the unique integer $x$
with $-p/2<x<p/2$ and $x-a$ divisible by $p$. Let $1=a_1<a_2<\cdots<a_{\frac{p-1}{2}}\le p-1$ be all quadratic residues modulo $p$.
Note that $\overline{1^2},\ \overline{2^2},\ \cdots,\ \overline{((p-1)/2)^2}$ is a permutation of $\overline{a_1},\ \overline{a_2},\ \cdots,\ \overline{a_{(p-1)/2}}$. Sun
first studied this problem and obtained the sign of this permutation in the case of $p\equiv3\pmod4$ by evaluating the product
$$\prod_{1\le i<j\le \frac{p-1}{2}}(\zeta_p^{j^2}-\zeta_p^{i^2}),$$
where $\zeta_p=e^{2\pi i/p}$. Inspired by Sun's work, we consider the following sequences.
\begin{align*}
&A_0: \overline{1^2},\ \overline{2^2},\ \cdots,\ \overline{((p-1)/2)^2},\\
&A_1: \overline{a_1},\ \overline{a_2},\ \cdots,\ \overline{a_{(p-1)/2}},\\
&A_2: \overline{g^2},\ \overline{g^4},\ \cdots,\ \overline{g^{p-1}},
\end{align*}
where $g\in\Z$ is a primitive root modulo $p$. Clearly $A_i$ is a permutation of $A_j$ and we call this permutation $\sigma_{i,j}$. As
mentioned before, when $p\equiv 3\pmod4$, Sun proved that
$$\sgn(\sigma_{0,1})=\begin{cases}1&\mbox{if}\ p\equiv 3\pmod8,\\(-1)^{(h(-p)+1)/2}&\mbox{if}\ p\equiv 7\pmod8,\end{cases}$$
where $h(-p)$ is the class number of $\Q(\sqrt{-p})$.

In this paper, we study the above permutations. We first introduce some notations.
Let $p\equiv 1\pmod 4$ be a prime. Let $\ve_p=(u_p+v_p\sqrt{p})/2>1$ and $h(p)$ be the fundamental unit and the class number of
$\Q(\sqrt{p})$ respectively. And we let $h(-4p)$ denote the class number of $\Q(\sqrt{-p})$.
We also define
$$s_p:=\prod_{\substack{1\le k\le (p-1)/2\\ (\frac{k}{p})=-1}}k$$
and let
$$r_p^*:=\#\{(x,y): 1\le x,y\le (p-1)/2,\ x+y\le (p-1)/2,\ \(\frac{x}{p}\)=\(\frac{y}{p}\)=1\},$$
where $\#S$ denotes the cardinality of a finite set $S$. Now we are in the position to state our first theorem.

\begin{theorem}\label{Thm p=1 mod4}
Let $p\equiv1\pmod4$ be a prime. Then
$$\sgn(\sigma_{0,1})=\begin{cases}\overline{s_pu_p^{(p-1)/4}}\cdot(-1)^{((h(p)+2)(p-1)+2h(-4p))/8+r_p^*}&\mbox{if}\ p\equiv 1\pmod8,
\\\\\overline{\frac12s_pu_p^{(p+3)/4}}\cdot(-1)^{((h(p)+2)(p+3)+2h(-4p)-4)/8+r_p^*}&\mbox{if}\ p\equiv 5\pmod8.\end{cases}$$
\end{theorem}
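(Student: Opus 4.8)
The plan is to compute $\sgn(\sigma_{0,1})$ as a quotient of Vandermonde products and then to evaluate a product of differences of $p$-th roots of unity. Write $n=(p-1)/2$ and $V(x_1,\dots,x_m)=\prod_{1\le i<j\le m}(x_j-x_i)$. Since $A_0$ is obtained from $A_1$ by $\sigma_{0,1}$ we have $V(A_0)=\sgn(\sigma_{0,1})\,V(A_1)$, so $\sgn(\sigma_{0,1})$ is the product of the signs of the nonzero reals $V(A_0)$ and $V(A_1)$. For $A_1=(\overline{a_1},\dots,\overline{a_n})$ with $a_1<\cdots<a_n$ the quadratic residues in $\{1,\dots,p-1\}$, the factor $\overline{a_j}-\overline{a_i}$ is negative exactly when $a_i<p/2<a_j$; as $p\equiv1\pmod4$ the involution $a\mapsto p-a$ fixes the set of quadratic residues and interchanges the two halves of $\{1,\dots,p-1\}$, so exactly $(p-1)/4$ of the $a_i$ lie below $p/2$ and $\text{sign}\,V(A_1)=(-1)^{((p-1)/4)^2}$. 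Hence everything reduces to finding $\text{sign}\prod_{1\le i<j\le n}(\overline{j^2}-\overline{i^2})$.

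For this I would pass to $\Q(\zeta_p)$. Using $\zeta_p^{j^2}-\zeta_p^{i^2}=\zeta_p^{\,\overline{j^2}}-\zeta_p^{\,\overline{i^2}}=2i\sin\!\bigl(\tfrac{\pi(\overline{j^2}-\overline{i^2})}{p}\bigr)e^{\pi i(\overline{i^2}+\overline{j^2})/p}$, the fact that $\sin(\pi t/p)$ and $t$ have the same sign when $|t|<p$, and the identity $\sum_{k=1}^{n}\overline{k^2}=0$ (which holds since $\overline{p-a}=-\overline{a}$ and the quadratic residues are symmetric under $a\mapsto p-a$), one gets
\[ P:=\prod_{1\le i<j\le n}\bigl(\zeta_p^{j^2}-\zeta_p^{i^2}\bigr)=i^{\binom n2}\cdot\Bigl(\text{sign}\prod_{1\le i<j\le n}(\overline{j^2}-\overline{i^2})\Bigr)\cdot|P|. \]
Thus $\text{sign}\,V(A_0)$ is the sign of the real number $P/i^{\binom n2}$, and the task becomes the exact evaluation of the cyclotomic integer $P$.

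To evaluate $P$, I would use that $\{\zeta_p^{1^2},\dots,\zeta_p^{n^2}\}=\{\zeta_p^{a}:(\tfrac ap)=1\}$, so $P^2=\operatorname{disc}(f)$ for $f(x)=\prod_{(\frac ap)=1}(x-\zeta_p^a)\in\Z[\tfrac{1+\sqrt p}2][x]$, the minimal polynomial of $\zeta_p$ over $\Q(\sqrt p)$. From $\Phi_p(x)=f(x)\overline f(x)$ (with $\overline f(x)=\prod_{(\frac ap)=-1}(x-\zeta_p^a)$) one has $f'(\zeta_p^a)\overline f(\zeta_p^a)=\Phi_p'(\zeta_p^a)=p\zeta_p^{-a}/(\zeta_p^a-1)$ for $(\tfrac ap)=1$; a Jacobi-sum count shows each nonzero residue occurs exactly $(p-1)/4$ times among the differences $a-b$ with $(\tfrac ap)=1$, $(\tfrac bp)=-1$, whence $\prod_{(\frac ap)=1,\ (\frac bp)=-1}(\zeta_p^a-\zeta_p^b)=p^{(p-1)/4}$; and Dirichlet's class number formula for $\Q(\sqrt p)$ gives $f(1)=\prod_{(\frac ap)=1}(1-\zeta_p^a)=\sqrt p\,\varepsilon_p^{-h(p)}$. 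Feeding these into $\operatorname{disc}(f)=(-1)^{\binom n2}\prod_{(\frac ap)=1}f'(\zeta_p^a)$ yields
\[ P^2=\operatorname{disc}(f)=(-1)^{\binom n2}\,p^{(p-3)/4}\,\varepsilon_p^{h(p)}, \]
so $|P|$ is determined and $P$ is one of its two square roots.

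The hard part is fixing which square root $P$ is, i.e.\ the sign of $P/i^{\binom n2}$. I would reduce $P$ modulo powers of the prime $\mathfrak p=(1-\zeta_p)$ above $p$: from $\zeta_p^k\equiv1-k(1-\zeta_p)\pmod{\mathfrak p^2}$ one obtains $P/(1-\zeta_p)^{\binom n2}\equiv(-1)^{\binom n2}\prod_{1\le i<j\le n}(j^2-i^2)\pmod{\mathfrak p}$, and Wilson's theorem together with the splitting of $\{1,\dots,(p-1)/2\}$ into residues and non-residues rewrites this as an explicit unit of $\F_p$ involving $s_p$. Matching it against the $\mathfrak p$-adic leading term of a fixed square root of $(-1)^{\binom n2}p^{(p-3)/4}\varepsilon_p^{h(p)}$ — here one uses $\sqrt p\equiv0\pmod{\mathfrak p}$, so that $\varepsilon_p\equiv u_p/2\pmod{\mathfrak p}$, and that the $\mathfrak p$-adic leading coefficient of the Gauss sum $\sqrt p=\sum_a(\tfrac ap)\zeta_p^a$ is $-1/((p-1)/2)!$ — determines $P$ up to a final sign; this last sign, together with the rotation caused by $i^{\binom n2}$ when $p\equiv5\pmod8$ (in which case $P$ is purely imaginary and $\sqrt{-p}$ intervenes, so that $h(-4p)$ enters via the class number formula for $\Q(\sqrt{-p})$), is pinned down by a lattice-point count that produces $(-1)^{r_p^*}$. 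Collecting all the exponents, reducing modulo $2$, and separating $p\equiv1$ from $p\equiv5\pmod8$ gives the stated formula; in particular the leading factor $\overline{s_pu_p^{(p-1)/4}}$ — respectively $\overline{\tfrac12 s_pu_p^{(p+3)/4}}$ — is forced to be $\pm1$. The genuine obstacle is exactly this last step: evaluating $P$ rather than only $P^2$, and normalising all the Gauss-sum and Wilson-type reductions carefully enough that the elementary factor and the sign $(-1)^{r_p^*}$ come out correctly; everything preceding it is formal bookkeeping.
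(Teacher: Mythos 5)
Your reduction to the exact evaluation of $P=\prod_{1\le i<j\le n}(\zeta_p^{j^2}-\zeta_p^{i^2})$ follows Sun's route (the one he used for $p\equiv3\pmod4$), and the preparatory identities you state are essentially correct: $\mathrm{sign}\,V(A_1)=(-1)^{((p-1)/4)^2}$, $P=i^{\binom{n}{2}}\cdot\mathrm{sign}\prod_{i<j}(\overline{j^2}-\overline{i^2})\cdot|P|$, $P^2=\mathrm{disc}(f)=(-1)^{\binom{n}{2}}p^{(p-3)/4}\varepsilon_p^{h(p)}$, the resultant value $p^{(p-1)/4}$, and $f(1)=\sqrt p\,\varepsilon_p^{-h(p)}$. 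But the argument stops exactly where the theorem begins: you never determine which square root of $\mathrm{disc}(f)$ the number $P$ actually is. In your decomposition $\sgn(\sigma_{0,1})=(-1)^{((p-1)/4)^2}\cdot\mathrm{sign}\prod_{i<j}(\overline{j^2}-\overline{i^2})$, every nontrivial ingredient of the stated formula --- $s_p$, the power of $u_p$, $h(-4p)$, and $(-1)^{r_p^*}$ --- must emerge from that single sign, and the passage you offer (``matching the $\mathfrak p$-adic leading term \dots determines $P$ up to a final sign; this last sign \dots is pinned down by a lattice-point count that produces $(-1)^{r_p^*}$'') is an assertion, not an argument: no congruence is actually matched, no count is set up, and no reason is given why a count of pairs of quadratic residues $x,y$ with $x+y\le(p-1)/2$ should fall out of a $\mathfrak p$-adic normalisation of a Gauss sum. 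Calling everything before this ``formal bookkeeping'' inverts the difficulty: for $p\equiv1\pmod4$ the presence of $\varepsilon_p^{h(p)}$ makes the choice of square root precisely as deep as the theorem itself, which is why Sun's evaluation of $P$ was confined to $p\equiv3\pmod4$. A further symptom that your proposed mechanism is off: you have $h(-4p)$ entering only in the case $p\equiv5\pmod8$ (when $\binom{n}{2}$ is odd and $P$ is purely imaginary), yet $h(-4p)$ appears in the final formula for $p\equiv1\pmod8$ as well.

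The paper avoids the archimedean question entirely. Since $\sgn(\sigma_{0,1})=\prod_{i<j}\bigl(\overline{j^2}-\overline{i^2}\bigr)\big/\prod_{i<j}\bigl(\overline{a_j}-\overline{a_i}\bigr)=\pm1$, it suffices to compute numerator and denominator modulo $p$. The numerator is $-n!\pmod p$ (Lemma \ref{j^2-i^2}), evaluated by Chowla's congruence $(\frac{p-1}{2})!\equiv(-1)^{(h(p)+1)/2}u_p/2\pmod p$; the denominator is written as $\prod_k k^{A_k^{++}}$, and the elementary character-sum identity $A_k^{++}+A_{p-k}^{++}=\frac14\bigl(p-3-2(\tfrac kp)\bigr)$ yields $\prod_{i<j}(a_j-a_i)\equiv(-1)^{r_p^*}\bigl((\frac{p-1}{2})!\bigr)^{(p-5)/4}s_p\pmod p$ --- this is where $r_p^*$ and $s_p$ really come from --- and finally the Williams--Currie congruence for $2^{(p-1)/4}$ is what brings in $h(-4p)$. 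If you wish to salvage your route you must carry out the $\mathfrak p$-adic comparison in full (fixing the signs of $\sqrt p$ and of the Gauss sum coherently, and proving, not asserting, the appearance of $(-1)^{r_p^*}$); as it stands the central step of the proof is missing.
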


Now we turn to the permutation $\sigma_{0,2}$. For convenience, we write $p=2n+1$ for some positive integer $n$.

\begin{theorem}\label{Thm primitive root}
Let $p\equiv1\pmod4$ be a prime, and let $g\in\Z$ be a primitive root modulo $p$ Then
$$\sgn(\sigma_{0,2})=\overline{\frac12n^{n/2}u_pg^{(p-1)(3n^2-n-2)/8}}\cdot(-1)^{(h(p)+1)/2}.$$
In particular, when $p\equiv 5\pmod 8$, $\sgn(\sigma_{0,2})$ is independent on the choice of $g$ and we have $$\sgn(\sigma_{0,2})=\overline{\frac12n^{n/2}u_p}\cdot(-1)^{(2h(p)+n)/4}.$$
\end{theorem}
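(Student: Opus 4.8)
The plan is to run Sun's cyclotomic method: reduce $\sgn(\sigma_{0,2})$ first to a ratio of Vandermonde-type products of $p$th roots of unity, then to an explicit congruence modulo $p$, and finally evaluate the two resulting products.

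First I would use the elementary fact that for distinct $x_1,\dots,x_n$ in a field and $\sigma\in S_n$ one has $\prod_{1\le i<j\le n}(x_{\sigma(j)}-x_{\sigma(i)})=\sgn(\sigma)\prod_{1\le i<j\le n}(x_j-x_i)$. Applying this with $x_k=\zeta_p^{k^2}$ — distinct since $1^2,\dots,n^2$ are pairwise incongruent mod $p$ — and noting that $\zeta_p^{a}$ depends only on $a\bmod p$, so that the rearrangement of $(\zeta_p^{1^2},\dots,\zeta_p^{n^2})$ into $(\zeta_p^{g^2},\dots,\zeta_p^{g^{p-1}})$ is exactly $\sigma_{0,2}$, gives $\sgn(\sigma_{0,2})=\prod_{i<j}(\zeta_p^{g^{2j}}-\zeta_p^{g^{2i}})\big/\prod_{i<j}(\zeta_p^{j^2}-\zeta_p^{i^2})$. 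Writing $\zeta_p^{a}-\zeta_p^{b}=-\zeta_p^{b}(1-\zeta_p)\cdot\frac{1-\zeta_p^{a-b}}{1-\zeta_p}$ (with $a-b>0$ in each case), the common factor $(-(1-\zeta_p))^{\binom n2}$ cancels, and reducing modulo the prime $(1-\zeta_p)$ above $p$ — i.e.\ applying $\mathbb Z[\zeta_p]\to\mathbb F_p$, $\zeta_p\mapsto1$, under which $\frac{1-\zeta_p^{m}}{1-\zeta_p}\mapsto m$ — yields
\[
\sgn(\sigma_{0,2})\equiv\frac{\prod_{1\le i<j\le n}(g^{2j}-g^{2i})}{\prod_{1\le i<j\le n}(j^2-i^2)}\pmod p,
\]
both products being nonzero mod $p$; since $\sgn(\sigma_{0,2})\in\{\pm1\}$ this pins it down, which explains the outer bar. (Equivalently, $\sigma_{0,2}$ is the permutation $k\mapsto|\overline{g^k}|$ of $\{1,\dots,n\}$, and $\prod_{i<j}(\pi(i)+\pi(j))=\prod_{i<j}(i+j)$ for every $\pi\in S_n$ turns the same identity into this congruence.)

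For the numerator I would set $\omega=\overline{g^2}$, a primitive $n$th root of unity in $\mathbb F_p$, and compute $\prod_{1\le i<j\le n}(\omega^j-\omega^i)$ from the factorization $\prod_i(x-\omega^i)=x^n-1$: pulling out $\prod_{i<j}\omega^i$, pairing the contribution of the difference $d$ with that of $n-d$ via $\omega^{n-d}-1=-\omega^{-d}(\omega^d-1)$ and $\omega^{n/2}=-1$, and using $\prod_{d=1}^{n-1}(1-\omega^d)=n$, one gets $\pm n^{n/2}\omega^{E}$ for an explicit exponent $E$ (note $n=(p-1)/2$ is even, so $n^{n/2}\in\mathbb Z$). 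Rewriting $\omega=\overline{g^2}$ and absorbing the leading sign into the exponent via $g^{(p-1)/2}\equiv-1$, the numerator becomes $n^{n/2}g^{(p-1)(3n^2-n-2)/8}\pmod p$. For the denominator I would use $\prod_{i<j}(j^2-i^2)=\prod_{i<j}(j-i)\cdot\prod_{i<j}(i+j)$, the first factor being the superfactorial $\prod_{k=1}^{n-1}k!$ and the square of the second equalling $\big(\prod_{k=1}^{p-1}k!\big)\big/\big(2^{n}n!\big(\prod_{k=1}^{n}k!\big)^2\big)$; combined with Wilson's theorem in the form $\prod_{k=1}^{p-1}k!\equiv\big(\tfrac{p-1}2\big)!\,(-1)^{(p^2-1)/8}\pmod p$ and with $2^{(p-1)/2}\equiv(\tfrac2p)=(-1)^{(p^2-1)/8}$, this gives the clean fact $\big(\prod_{i<j}(j^2-i^2)\big)^2\equiv-1\pmod p$, so $\prod_{i<j}(j^2-i^2)\equiv\pm\big(\tfrac{p-1}2\big)!\pmod p$. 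Determining the sign is the crux: I would fix it by going back to the algebraic number $\prod_{1\le i<j\le n}(\zeta_p^{j^2}-\zeta_p^{i^2})$ — the product already analyzed in connection with Theorem~\ref{Thm p=1 mod4} — and evaluating this Gauss-sum-type product over the real quadratic subfield $\mathbb Q(\sqrt p)\subset\mathbb Q(\zeta_p)$ by the analytic class number formula / cyclotomic-unit index theorem (equivalently, the classical evaluation of $\prod_{(\frac kp)=1}\sin\frac{\pi k}p\big/\prod_{(\frac kp)=-1}\sin\frac{\pi k}p$ as a power of the fundamental unit). This introduces $u_p$ via $\ve_p\equiv u_p/2\pmod{\sqrt p}$ and the class number $h(p)$, and should yield $\prod_{1\le i<j\le n}(j^2-i^2)\equiv\frac{2}{u_p}\,(-1)^{(h(p)+1)/2}\pmod p$ (meaningful since $h(p)$ is odd).

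Dividing the numerator value by the denominator value then gives $\sgn(\sigma_{0,2})\equiv\frac12 n^{n/2}u_p\,g^{(p-1)(3n^2-n-2)/8}\,(-1)^{(h(p)+1)/2}\pmod p$, and pulling the sign $(-1)^{(h(p)+1)/2}$ out of the bar yields the stated formula. For $p\equiv5\pmod8$ we have $n\equiv2\pmod4$, whence $\tfrac{(p-1)(3n^2-n-2)}8$ is a multiple of $\tfrac{p-1}2$, so $g^{(p-1)(3n^2-n-2)/8}\equiv\pm1\pmod p$ and the formula loses its dependence on $g$; bookkeeping that residual sign and merging it with $(-1)^{(h(p)+1)/2}$ (using $h(p)$ odd and $n\equiv2\pmod4$) converts the exponent into $(2h(p)+n)/4$. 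I expect the principal difficulty to be the sign determination in the denominator — fixing the factor $(-1)^{(h(p)+1)/2}$, which is where the arithmetic of $\mathbb Q(\sqrt p)$ genuinely enters — together with the uniform control of $2$-powers and signs needed to produce the exact exponent $(p-1)(3n^2-n-2)/8$ and to separate cleanly the cases $p\equiv1\pmod8$ and $p\equiv5\pmod8$.
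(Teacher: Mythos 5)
Your proposal is correct in substance and arrives at exactly the two congruences the paper needs, but by a genuinely different route in each half. For $\prod_{1\le i<j\le n}(g^{2j}-g^{2i})$ you work entirely in $\mathbb F_p$ with $\omega=\overline{g^2}$ of order $n$: pairing $d$ with $n-d$ leaves the unknown half-product $\prod_{d<n/2}(\omega^d-1)$ only to even powers (here it is essential that $n$ is even), so the sign closes up from $\prod_{d=1}^{n-1}(1-\omega^d)=n$ and $\omega^{n/2}=-1$; I checked your target $n^{n/2}g^{(p-1)(3n^2-n-2)/8}$ numerically and it matches. The paper instead evaluates $\prod_{i<j}(\zeta^{2j}-\zeta^{2i})$ for $\zeta=e^{2\pi i/(p-1)}$ in $\mathbb C$ (modulus plus an argument count, Lemma 2.4) and descends to $\mathbb F_p$ via the splitting of $p$ in $\mathbb Q(\zeta)$ and Kummer's theorem; your version is more elementary, at the price of heavier bookkeeping. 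For $\prod_{i<j}(j^2-i^2)$ the paper just cites Sun's congruence $\prod_{i<j}(j^2-i^2)\equiv-n!\pmod p$ and Chowla's theorem $n!\equiv(-1)^{(h(p)+1)/2}u_p/2\pmod p$, whereas you propose to rederive the combined value from Wilson's theorem plus the class-number-formula evaluation of the sine products. Your value $\frac{2}{u_p}(-1)^{(h(p)+1)/2}$ is equivalent to the paper's $-(-1)^{(h(p)+1)/2}u_p/2$ since $u_p^2\equiv-4\pmod p$ (the fundamental unit has norm $-1$), so nothing is wrong; but this is the one step you leave at the level of ``should yield,'' and it amounts to reproving Chowla's theorem. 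The economical fix is to cite Chowla and settle the residual sign by the short elementary argument behind the paper's Lemma 2.1, rather than returning to $\prod(\zeta_p^{j^2}-\zeta_p^{i^2})$, which carries a power of $1-\zeta_p$ and is awkward to reduce modulo $p$.
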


As an application of Theorem \ref{Thm primitive root}, we can calculate some determinants concerning Dirichlet characters modulo $p$.
Let $p$ be an odd prime and let $g$ be a primitive root modulo $p$. Let $\widehat{\mathbb{F}_p}$ denote the group of Dirichlet
characters modulo $p$ and let $\chi$ be a generator of $\widehat{\mathbb{F}_p}$. We consider the following matrix:
$$ M_p=\left( \begin{matrix} \chi(1^2) & \chi(2^2) & \cdots & \chi((\frac{p-1}{2})^2) \\ \chi^2(1^2) & \chi^2(2^2) & \cdots &
\chi^2((\frac{p-1}{2})^2) \\ \vdots & \vdots & \ddots & \vdots \\ \chi^{\frac{p-1}{2}}(1^2) & \chi^{\frac{p-1}{2}}(2^2) & \cdots &
\chi^{\frac{p-1}{2}}((\frac{p-1}{2})^2) \\ \end{matrix} \right).$$
We have the following result.

\begin{corollary}\label{determinant}
Let $p\equiv 5\pmod8$ be a prime. Then
$$\det(M_p)=n^{n/2}(-1)^{(n+2)/4}\times\(\overline{\frac12n^{n/2}u_p}\cdot(-1)^{(2h(p)+n)/4}\)\in\Z.$$
\end{corollary}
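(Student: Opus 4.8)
The plan is to express $\det(M_p)$ as $\sgn(\sigma_{0,2})$ times a Vandermonde-type determinant that can be evaluated explicitly, and then to feed in the value of $\sgn(\sigma_{0,2})$ for $p\equiv5\pmod8$ supplied by Theorem \ref{Thm primitive root}.

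First I would fix the primitive root $g$ and choose the generator $\chi$ so that $\chi(g)=\zeta:=e^{2\pi i/(p-1)}$; this is legitimate because $\det(M_p)$ does not depend on the chosen generator. Indeed, replacing $\chi$ by $\chi^{s}$ with $\gcd(s,p-1)=1$ merely permutes the rows of $M_p$ via the map $i\mapsto si$ on $\Z/\tfrac{p-1}{2}\Z$ (each row $i$ depending on $i$ only modulo $\tfrac{p-1}{2}$), and since $\tfrac{p-1}{2}=2m$ with $m=\tfrac{p-1}{4}$ odd, the Chinese Remainder Theorem shows this permutation is even. Write $n=\tfrac{p-1}{2}$, so $n\equiv2\pmod4$, and put $\omega=\zeta^{2}=e^{2\pi i/n}$, a primitive $n$-th root of unity.

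Since $\chi^{i}(g^{2k})=\zeta^{2ik}=\omega^{ik}$, the matrix $N_p:=(\chi^{i}(g^{2k}))_{1\le i,k\le n}$ equals $(\omega^{ik})_{1\le i,k\le n}$. The columns of $M_p$ list the sequence $A_0$ while those of $N_p$ list $A_2$, and because $A_0$ is the permutation $\sigma_{0,2}$ of $A_2$ there is a permutation $\rho$ of $\{1,\dots,n\}$ with $\overline{j^{2}}=\overline{g^{2\rho(j)}}$ for every $j$, where $\rho=\sigma_{0,2}^{\pm1}$; hence $M_p=N_pP_\rho$ for the corresponding permutation matrix and $\det(M_p)=\sgn(\sigma_{0,2})\det(N_p)$. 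Moving the last row of $N_p$ to the top and the last column to the left are two cyclic permutations of $n$ symbols whose signs multiply to $(-1)^{2(n-1)}=1$, so $\det(N_p)=\det(F_n)$ with $F_n:=(\omega^{jk})_{0\le j,k\le n-1}$ the standard $n$-point DFT matrix.

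It remains to evaluate $\det(F_n)$, which is where the main work lies. Viewing $\det(F_n)=\prod_{0\le j<k\le n-1}(\omega^{k}-\omega^{j})$ as a Vandermonde determinant, I would combine $\prod_{j\ne k}(\omega^{k}-\omega^{j})=\prod_{k=0}^{n-1}n\omega^{-k}=(-1)^{n-1}n^{n}$ with $\prod_{j\ne k}(\omega^{k}-\omega^{j})=(-1)^{\binom{n}{2}}\det(F_n)^{2}$ to obtain $\det(F_n)^{2}=(-1)^{n-1+\binom{n}{2}}n^{n}=n^{n}$ when $n\equiv2\pmod4$, so $\det(F_n)=\pm n^{n/2}$. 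To fix the sign I would use that the normalized matrix $n^{-1/2}F_n$ has order $4$: its trace $n^{-1/2}\sum_{k=0}^{n-1}e^{2\pi ik^{2}/n}$ is $0$ since the quadratic Gauss sum vanishes for $n\equiv2\pmod4$, while $n^{-1}F_n^{2}$ is the permutation matrix of $k\mapsto-k$ on $\Z/n\Z$ and hence has trace $\gcd(2,n)=2$; solving for the multiplicities of the eigenvalues $1,-1,i,-i$ gives $(n+2)/4$ for each of $\pm1$ and $(n-2)/4$ for each of $\pm i$, whence $\det(n^{-1/2}F_n)=(-1)^{(n+2)/4}$ and $\det(F_n)=n^{n/2}(-1)^{(n+2)/4}$. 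Substituting $\sgn(\sigma_{0,2})=\overline{\tfrac12 n^{n/2}u_p}\cdot(-1)^{(2h(p)+n)/4}$ from Theorem \ref{Thm primitive root} (case $p\equiv5\pmod8$) yields the asserted formula, and since $\sgn(\sigma_{0,2})\in\{\pm1\}$ this equals $\pm n^{n/2}\in\Z$. The main obstacle is precisely this sign determination for $\det(F_n)$: the Vandermonde computation alone leaves it as $\pm n^{n/2}$, and one must bring in the vanishing of the Gauss sum together with $\mathrm{tr}(F_n^{2})$ — equivalently, the classical eigenvalue multiplicities of the DFT matrix — to settle it; by comparison, identifying the column permutation with $\sigma_{0,2}$ and the row-permutation argument for independence of $\chi$ are routine bookkeeping that must nonetheless be done carefully with the sign conventions.
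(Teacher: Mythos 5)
Your proof is correct, and its skeleton is the same as the paper's: write $\det(M_p)=\sgn(\sigma_{0,2})\cdot\det N$, where $N$ is the matrix of character values at $g^{2k}$, and then insert the $p\equiv5\pmod8$ value of $\sgn(\sigma_{0,2})$ from Theorem \ref{Thm primitive root}. The difference lies in how the auxiliary determinant is evaluated. The paper simply quotes Remark \ref{Remark determinant} together with Lemma \ref{p-1 th root of unity}: there $\det(N_{p,g})=-\prod_{1\le i<j\le n}(\zeta^{2j}-\zeta^{2i})$, and the product is computed for every $p$ by finding its modulus (squaring the Vandermonde product) and its argument (summing $\Arg(\zeta^{2j}-\zeta^{2i})$ term by term), giving $n^{n/2}e^{(3n^2-n-2)\pi i/4}$, which specializes to $n^{n/2}(-1)^{(n-2)/4}$ when $p\equiv5\pmod8$. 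You instead reduce $N$ to the $n$-point DFT matrix $F_n$ (your cyclic row/column shifts, sign $+1$, match the paper's extra factor $-1$ in Remark \ref{Remark determinant} once the signs are tracked), and settle the sign of $\det(F_n)=\pm n^{n/2}$ via the classical eigenvalue multiplicities of $n^{-1/2}F_n$, using that its trace is the quadratic Gauss sum, which vanishes for $n\equiv2\pmod4$, and that $\mathrm{tr}(F_n^2)=2n$; this yields $n^{n/2}(-1)^{(n+2)/4}$, in agreement with the paper. Your route is self-contained and classical but is tailored to $n\equiv2\pmod4$ (the Gauss sum does not vanish otherwise), whereas Lemma \ref{p-1 th root of unity} gives the argument for all $p$ and is what the paper also needs in the proof of Theorem \ref{Thm primitive root}. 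A further point in your favor: you verify explicitly that $\det(M_p)$ is independent of the chosen generator $\chi$ (the even row permutation via the CRT splitting of $\Z/n\Z$ with $n=2m$, $m$ odd), a point the paper passes over by implicitly choosing $g$ with $\chi(g)=\zeta$; your bookkeeping here is correct.
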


\begin{remark}
The signs of permutations have deep connections with the calculations of determinants. The readers may see \cite{S19} for more
examples.
\end{remark}

The proofs of Theorem 1.1--1.2 and Corollary 1.1 will be given in the next section.
\medskip

\maketitle
\section{Proofs of Theorem \ref{Thm p=1 mod4}--\ref{Thm primitive root} and Corollary \ref{determinant}}
\setcounter{lemma}{0}
\setcounter{theorem}{0}
\setcounter{corollary}{0}
\setcounter{remark}{0}
\setcounter{equation}{0}
\setcounter{conjecture}{0}

Let $p$ be an odd prime. Throughout this section, we set $p=2n+1$.
We begin with the following lemma (cf. \cite[(1.5)]{S}).

\begin{lemma}\label{j^2-i^2}
$$\prod_{1\le i<j\le n}(j^2-i^2)\equiv\begin{cases}-n!\pmod p&\mbox{if}\ p\equiv 1\pmod4,\\1\pmod p&\mbox{if}\ p\equiv
3\pmod4.\end{cases}$$
\end{lemma}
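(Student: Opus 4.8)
The plan is to evaluate the product $\prod_{1\le i<j\le n}(j^2-i^2)$ modulo $p$ by factoring each term as $(j-i)(j+i)$ and reorganizing the resulting double product into quantities whose residues are classical. First I would write
$$\prod_{1\le i<j\le n}(j^2-i^2)=\prod_{1\le i<j\le n}(j-i)\cdot\prod_{1\le i<j\le n}(j+i).$$
The first factor $\prod_{1\le i<j\le n}(j-i)$ is the standard ``staircase'' product: for each fixed difference $d=j-i$ ranging over $1,\dots,n-1$ there are $n-d$ pairs, so it equals $\prod_{d=1}^{n-1}d^{\,n-d}=\prod_{k=1}^{n-1}k!$ (the superfactorial-type expression), an honest positive integer that I will carry along symbolically. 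The second factor $\prod_{1\le i<j\le n}(i+j)$ has terms $i+j$ with $3\le i+j\le 2n-1=p-2$; I would group these by the value $s=i+j$ and count, for each $s$, the number of representations $s=i+j$ with $1\le i<j\le n$, which is $\lfloor (s-1)/2\rfloor$ for $s\le n+1$ and $n-\lceil s/2\rceil$ for $s\ge n+1$, and then reduce each residue class mod $p$ using $s\equiv s-p$ when $s>n$.

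The cleaner route, which I expect to be the one actually used, is to multiply the two displayed factors back together in a symmetric way: observe that $\{\,\pm(j-i),\ \pm(i+j) : 1\le i<j\le n\,\}$, read modulo $p$, ranges over controlled multisets of nonzero residues. Concretely, for fixed $j$ the numbers $j-i$ ($1\le i<j$) together with $j+i$ ($1\le i<j$) are, modulo $p$, exactly the residues $\pm1,\pm2,\dots$ omitting $0,\pm j$ in a suitable range; assembling over all $j$ turns $\prod_{i<j}(j^2-i^2)$ into a ratio of factorials times $\prod_{j=1}^n$ of something like $(2j)!/(j!)^2$ or $\binom{2j}{j}$ up to sign, at which point Wilson's theorem $(p-1)!\equiv-1$ and the evaluation $\big(\frac{(p-1)/2)!}\big)^2\equiv(-1)^{(p+1)/2}\pmod p$ collapse everything. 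The sign discrepancy between the $p\equiv1$ and $p\equiv3$ cases will come precisely from this $(-1)^{(p+1)/2}$ factor (equivalently from whether $-1$ is a quadratic residue), and the residual $-n!$ versus $1$ must be reconciled against the staircase product $\prod_{k=1}^{n-1}k!$ via repeated use of $k!\,(p-1-k)!\equiv(-1)^{k+1}\pmod p$.

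The main obstacle is bookkeeping: correctly counting multiplicities in $\prod_{i<j}(i+j)$ after the fold $s\mapsto s-p$, and tracking the accumulated sign $(-1)^{(\text{number of folded terms})}$ together with the sign from pairing $k!$ with $(p-1-k)!$. None of it is deep — it is all Wilson's theorem plus careful index chasing — but the parity of the exponent of $-1$ is where an off-by-one error would flip the answer, so I would verify the final formula numerically for $p=5,7,11,13$ before trusting the count. Since the lemma is quoted from \cite[(1.5)]{S}, I would in practice simply cite that computation and, if a self-contained argument is wanted, present the factorization-plus-Wilson sketch above with the multiplicity count done explicitly in one display.
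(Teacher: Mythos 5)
The paper gives no proof of this lemma at all—it is quoted directly from \cite[(1.5)]{S}—so your closing remark that one would in practice simply cite Sun is exactly what the author does. Your elementary sketch is nevertheless correct and does close up, though the vague ``ratio of factorials times something like $(2j)!/(j!)^2$ up to sign'' step can be pinned down more cleanly than either of your two routes: for fixed $j$ one has $\prod_{i=1}^{j-1}(j-i)(j+i)=(j-1)!\cdot\frac{(2j-1)!}{j!}=\frac{(2j-1)!}{j}$, so the whole product equals $\frac{1}{n!}\prod_{j=1}^{n}(2j-1)!$ as an exact integer identity, and no folding $s\mapsto s-p$ or multiplicity count for $\prod_{i<j}(i+j)$ is needed. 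The factorial arguments $1,3,\dots,p-2$ are permuted by $k\mapsto p-1-k$, and your identity $k!\,(p-1-k)!\equiv(-1)^{k+1}\pmod p$ contributes $+1$ for each such pair (every $k$ here is odd); the sole fixed point $k=n$ occurs exactly when $n$ is odd, i.e.\ when $p\equiv 3\pmod 4$. Hence $\prod_{j=1}^{n}(2j-1)!\equiv n!\pmod p$ for $p\equiv 3\pmod 4$, giving the value $1$, while for $p\equiv 1\pmod 4$ it is $\equiv 1$ and the product becomes $(n!)^{-1}\equiv -n!$ because $(n!)^2\equiv(-1)^{(p+1)/2}\equiv-1\pmod p$ — precisely the Wilson-plus-parity mechanism you predicted as the source of the case split. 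So there is no gap in your plan, only unfinished bookkeeping, and the bookkeeping does come out as stated.
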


Mordell \cite{M} showed that if $p>3$ is a prime and $p\equiv 3\pmod4$ then
$$\(\frac{p-1}{2}\)!\equiv (-1)^{\frac{h(-p)+1}{2}}\pmod p,$$
where $h(-p)$ is the class number of $\Q(\sqrt{-p})$. Later S. Chowla \cite{C} extended Mordell's result and obtained the following
result.

\begin{lemma}\label{Chowla lemma}
Let $p\equiv1\pmod4$ be a prime. Then we have
$$\(\frac{p-1}{2}\)!\equiv \frac{(-1)^{(h(p)+1)/2}u_p}{2}\pmod p,$$
where $h(p)$ and $u_p$ are defined as in Theorem \ref{Thm p=1 mod4}.
\end{lemma}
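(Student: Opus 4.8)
The final statement to prove is Lemma~\ref{Chowla lemma}, Chowla's evaluation of $((p-1)/2)!$ modulo $p$ for $p\equiv 1\pmod 4$. The plan is to reduce this to the evaluation of a Jacobi-type quadratic Gauss sum, or rather to the classical formula for the fundamental unit of $\Q(\sqrt p)$ in terms of a product over quadratic residues. First I would recall Wilson's theorem in the form $((p-1)/2)!^2 \equiv (-1)^{(p-1)/2+1}\cdot(p-1)! / \text{(duplicated terms)}$; more precisely, pairing $k$ with $p-k$ gives $(p-1)! \equiv (-1)^{(p-1)/2}\,((p-1)/2)!^2 \pmod p$, so by Wilson $((p-1)/2)!^2 \equiv (-1)^{(p+1)/2} \equiv -1 \pmod p$ when $p\equiv 1\pmod 4$. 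Thus $w := ((p-1)/2)!$ is a square root of $-1$ mod $p$, and the whole content of the lemma is the determination of \emph{which} square root it is, in terms of arithmetic invariants of $\Q(\sqrt p)$.

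The key tool is Dirichlet's class number formula for real quadratic fields. For $p\equiv 1\pmod 4$ one has the classical identity
$$\ve_p^{h(p)} = \prod_{\substack{1\le a\le (p-1)/2 \\ (\frac a p)=1}}\, \l(\text{something}\r)\Big/\prod_{\substack{1\le a\le (p-1)/2 \\ (\frac a p)=-1}}(\cdots),$$
but the cleaner route is via the relation between $w=((p-1)/2)!$ and the product $R := \prod_{(\frac a p)=1,\,1\le a\le(p-1)/2} a$ over the quadratic residues in the lower half-interval versus $N := \prod_{(\frac a p)=-1,\,1\le a\le(p-1)/2} a$ over the non-residues. Since $RN = ((p-1)/2)! = w$ and, by Euler's criterion applied termwise together with the count of residues/non-residues in $[1,(p-1)/2]$, one gets $R/N \equiv \pm\,\ve_p^{h(p)}$ or its reciprocal modulo $p$ — this is precisely the mod $p$ reduction of the analytic class number formula, where the sign of $\ve_p$'s norm (which is $-1$ when $p\equiv 1\pmod 4$) enters. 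Combining $w = RN$ with $R/N \equiv (\text{unit})$ and $R^2/N^2 \equiv -(\text{something computable})$, one solves for $w$ modulo $p$ and obtains $w \equiv \pm u_p/2$, the sign being $(-1)^{(h(p)+1)/2}$.

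Concretely, I would: (1) establish $w^2\equiv -1\pmod p$ via Wilson as above; (2) invoke the standard evaluation of $\ve_p^{h(p)}$ modulo $p$ coming from the finite-field analogue of the Dirichlet class number formula — namely that $\prod_{a}(1-\zeta_p^a)$ over residues divided by the same over non-residues equals (a power of) $\ve_p$, and reduce this mod a prime above $p$ in $\Z[\zeta_p]$ using $1-\zeta_p^a \equiv -a\cdot(\text{common factor})$ after taking logarithmic-type telescoping, which turns the cyclotomic product into the factorial-type product; (3) track the power of $2$ and the ambiguous sign, pinning down the sign by a known special case or by comparing with the residue-count formula $\frac{p-1}{4} - (\text{number of non-residues in }[1,(p-1)/2])$ parity. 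The main obstacle is step (2)/(3): correctly reducing the class number formula modulo $p$ and, above all, nailing the sign $(-1)^{(h(p)+1)/2}$ rather than just $\pm$. That sign determination is the delicate point — it requires either citing Chowla's original argument (as the paper does, this being quoted as a known result from \cite{C}) or redoing the careful bookkeeping of how $h(p)$ enters the parity when passing from $\ve_p^{h(p)}$ to $\ve_p$ and from the cyclotomic unit to the factorial. Since the lemma is explicitly attributed to Chowla, for the purposes of this paper I would simply cite \cite{C} and \cite{M} and not reprove it; but the sketch above is how one would reconstruct it from the class number formula if needed.
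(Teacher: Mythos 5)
Your proposal matches the paper exactly: the paper states this lemma as a known result of S.~Chowla \cite{C} (extending Mordell \cite{M}) and gives no proof, so citing \cite{C} is precisely what is done. Your accompanying sketch --- Wilson's theorem showing $((p-1)/2)!$ is a square root of $-1$ modulo $p$ (as is $u_p/2$, since $\ve_p$ has norm $-1$), with the sign pinned down via the reduction of the class number formula for $\Q(\sqrt p)$ modulo a prime above $p$ --- is a fair outline of Chowla's actual argument, though as you note the sign bookkeeping is the delicate part and is not carried out here.
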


In 1982 K. S. Williams and J. D. Currie \cite{WC} obtained the following result.

\begin{lemma}\label{K.S Williams lemma}
Let $p\equiv1\pmod4$ be a prime. We have
$$2^{\frac{p-1}{4}}\equiv\begin{cases}(-1)^{(p-1)/8+h(-4p)/4}\pmod p&\mbox{if}\ p\equiv 1\pmod8,
\\(-1)^{(p-5)/8+(h(-4p)-2))/4}\cdot(\frac{p-1}{2})!\pmod p&\mbox{if}\ p\equiv 5\pmod8.\end{cases}$$

\end{lemma}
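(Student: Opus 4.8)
The plan is to handle the two cases $p\equiv1\pmod8$ and $p\equiv5\pmod8$ separately, reducing each to a statement about the biquadratic residue character of $2$ and then converting that into class-number language. I would begin with the elementary reductions. By Euler's criterion $2^{(p-1)/2}\equiv\l(\f{2}{p}\r)\pmod p$, which equals $1$ when $p\equiv1\pmod8$ and $-1$ when $p\equiv5\pmod8$. Hence $2^{(p-1)/4}\equiv\pm1\pmod p$ in the first case, so only a single sign must be determined. In the second case $2^{(p-1)/4}$ is a primitive fourth root of unity modulo $p$; since Wilson's theorem gives $\l(\f{p-1}{2}\r)!^2\equiv(-1)^{(p-1)/2}\equiv-1\pmod p$, it follows that $2^{(p-1)/4}\equiv\pm\l(\f{p-1}{2}\r)!\pmod p$, and again the whole content is one sign. (This also explains why $\l(\f{p-1}{2}\r)!$ appears only in the second branch of the lemma.)

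Next I would invoke Gauss's classical determination of the biquadratic character of $2$. Write $p=a^2+b^2$ with $a$ odd and $b$ even, normalised so that the Gaussian prime $\pi=a+bi$ is primary. Reduction modulo $\pi$ identifies $i$ with the residue $-ab^{-1}\pmod p$, and Gauss's congruence $2^{(p-1)/4}\equiv\l(\f{2}{\pi}\r)_4\equiv i^{-b/2}\pmod\pi$ then reads: $2^{(p-1)/4}\equiv(-1)^{b/4}\pmod p$ when $p\equiv1\pmod8$ (so $4\mid b$), and $2^{(p-1)/4}\equiv\pm(-ab^{-1})\equiv\pm\l(\f{p-1}{2}\r)!\pmod p$ when $p\equiv5\pmod8$, the ambiguous sign being controlled by $b/2\bmod4$ and the signs of $a,b$. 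To pass to the stated form I would use the classical congruences that express $a$ and $b$ modulo a suitable power of $2$ in terms of $h(-4p)$ modulo $4$ — these follow from Dirichlet's analytic class-number formula for $\Q(\sqrt{-p})$ (discriminant $-4p$), together with genus theory (which forces $4\mid h(-4p)$ when $p\equiv1\pmod8$ and $h(-4p)\equiv2\pmod4$ when $p\equiv5\pmod8$, so that $h(-4p)/4$ and $(h(-4p)-2)/4$ are integers) and with Gauss's evaluation $a\equiv\pm\f12\binom{(p-1)/2}{(p-1)/4}\pmod p$. Feeding these into the congruence of the previous step produces exactly the exponents $(p-1)/8+h(-4p)/4$ and $(p-5)/8+(h(-4p)-2)/4$.

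The hard part will not be any single deep fact but the bookkeeping of normalisations: choosing the primary Gaussian prime $\pi=a+bi$ and hence the distinguished square root of $-1$ modulo $p$, tracking the signs of $a$ and $b$, and matching the parities of the several exponents that occur; and, on the analytic side, extracting from the class-number formula the precise congruence for $a$ and $b$ modulo a power of $2$ in terms of $h(-4p)$. Every normalisation must be handled consistently, since the constants in the statement are sensitive to all of them. Once that is done, the two branches of the lemma drop out by direct substitution.
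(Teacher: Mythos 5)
You should first note that the paper does not prove this lemma at all: it is quoted verbatim from Williams and Currie \cite{WC}, so the only ``proof'' in the paper is a citation, and your attempt has to be judged as a reconstruction of that cited theorem. Your elementary reductions are fine: Euler's criterion shows that only a sign is at stake when $p\equiv1\pmod 8$, and Wilson's theorem shows $\l(\f{p-1}{2}\r)!$ is a square root of $-1$ modulo $p$, so for $p\equiv5\pmod 8$ the claim is again a single sign; invoking Gauss's determination of the quartic character of $2$ in terms of $p=a^2+b^2$ is also the right classical starting point.

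The genuine gap is the step where you pass from the representation $p=a^2+b^2$ (equivalently from $(-1)^{b/4}$, resp.\ the ambiguous sign when $p\equiv5\pmod 8$) to the parity of $h(-4p)/4$, resp.\ $(h(-4p)-2)/4$. You assert that ``classical congruences expressing $a$ and $b$ in terms of $h(-4p)$ modulo $4$ follow from Dirichlet's analytic class-number formula'' plus genus theory and Gauss's congruence $2a\equiv\binom{(p-1)/2}{(p-1)/4}\pmod p$, and that the rest is bookkeeping. But determining $h(-4p)$ modulo $8$ (which is what the exponents $h(-4p)/4$ and $(h(-4p)-2)/4$ require) is not a routine consequence of Dirichlet's formula: it is precisely the substance of the results of Hasse, Barrucand--Cohn, Brown and Williams--Currie that the lemma encapsulates, and your sketch gives no derivation of it --- it simply relabels the difficulty. (A smaller inaccuracy: plain genus theory only gives $2\mid h(-4p)$; the finer fact that $4\mid h(-4p)$ exactly when $p\equiv1\pmod 8$ already needs a R\'edei/Gauss-type biquadratic argument, and the binomial-coefficient congruence for $a$ is a congruence modulo $p$, which by itself carries no information about $h(-4p)$.) So as written the proposal reduces the lemma to an unproved statement equivalent to the lemma itself; to make it a proof you would have to supply the class-number-modulo-$8$ computation, e.g.\ along the lines of the cited paper \cite{WC}.
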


\noindent{\it Proof of Theorem}\ 1.1. Let $p\equiv 1\pmod4$ be a prime. It follows from definition that
$$\sgn(\sigma_{0,1})=\prod_{1\le i<j\le n}\frac{\overline{j^2}-\overline{i^2}}{\overline{a_j}-\overline{a_i}}.$$
We first consider the numerator. By Lemma \ref{j^2-i^2}--\ref{Chowla lemma} we have
\begin{equation}\label{equation numerator}
\prod_{1\le i<j\le n}(j^2-i^2)\equiv -n!\equiv -1\cdot\frac{(-1)^{(h(p)+1)/2}u_p}{2}\pmod p.
\end{equation}
We now turn to the denominator. It is clear that
$$\prod_{1\le i<j\le n}(a_j-a_i)=\prod_{1\le k\le p-1}k^{A_k^{++}},$$
where $A_k^{++}:= \#\{1\le x\le p-1-k: (\frac{x}{p})=(\frac{x+k}{p})=1\}$. To calculate $A_k^{++}$, we set
\begin{align*}
A_k^{--}:=& \#\{1\le x\le p-1-k: \(\frac{x}{p}\)=\(\frac{x+k}{p}\)=-1\},\\
A_k^{+-}:=& \#\{1\le x\le p-1-k: \(\frac{x}{p}\)=1,\ \(\frac{x+k}{p}\)=-1\},\\
A_k^{-+}:=& \#\{1\le x\le p-1-k: \(\frac{x}{p}\)=-1,\ \(\frac{x+k}{p}\)=1\}.
\end{align*}
We also let $r_k:=\#\{1\le x\le p-1-k: (\frac{x}{p})=1\}.$ Then clearly we have
$$A_k^{++}+A_k^{+-}=r_k,\ \text{and}\ A_k^{-+}+A_k^{--}=p-1-k-r_k.$$
From the map $x\mapsto p-k-x$, we immediately obtain that
$$A_k^{+-}=A_k^{-+}.$$
Hence $A_k^{++}+A_k^{-+}=r_k.$ Noting that
$$A_k^{++}+A_k^{+-}+A_k^{-+}+A_k^{--}=p-1-k,$$
we obtain that
$$A_k^{+-}+A_k^{--}=p-1-k-r_k.$$
Moreover, we have
$$A_k^{++}+A_k^{--}-A_k^{+-}-A_k^{-+}=\sum_{1\le x\le p-1-k}\(\frac{x}{p}\)\(\frac{x+k}{p}\).$$
Combining the above equations, we obtain
$$A_k^{++}=r_k+\frac{-p+k+1}{4}+\frac{1}{4}\sum_{1\le x\le p-1-k}\(\frac{x^2+kx}{p}\).$$
Replacing $k$ by $p-k$, we get
$$A_{p-k}^{++}=r_{p-k}+\frac{-k+1}{4}+\frac{1}{4}\sum_{1\le x\le k-1}\(\frac{x^2-kx}{p}\).$$
Noting that
\begin{align*}
r_{p-k}=\#\{1\le x\le k-1: \(\frac{x}{p}\)=1\}
=\#\{p+1-k\le x\le p-1: \(\frac{x}{p}\)=1\},
\end{align*}
we therefore have
$$r_k+r_{p-k}=\frac{p-1}{2}-\frac12(1+\(\frac{k}{p}\)).$$
Observing that
$$\sum_{1\le x\le k-1}\(\frac{x^2-kx}{p}\)=\sum_{p+1-k\le x\le p-1}\(\frac{x^2+kx}{p}\),$$
we obtain
\begin{align*}
\sum_{1\le x\le p-1-k}\(\frac{x^2+kx}{p}\)+\sum_{1\le x\le k-1}\(\frac{x^2-kx}{p}\)=&\sum_{1\le x\le p-1}\(\frac{x^2+kx}{p}\)\\
=&\sum_{0\le x\le p-1}\(\frac{(2x+k)^2-k^2}{p}\)=-1.
\end{align*}
The last equation follows from \cite[p.63 Exercise 8]{IR}.
In view of the above, we have
$$A_k^{++}+A_{p-k}^{++}=\frac{1}{4}(p-3-2\(\frac{k}{p}\)).$$
Thus
$$\prod_{1\le i<j\le n}(a_j-a_i)\equiv (-1)^{\sum_{1\le k\le n}A_{p-k}^{++}}\times
\prod_{1\le k\le n}k^{\frac{p-1}{4}}k^{\frac{-1}{2}(1+(\frac{k}{p}))}\pmod p.$$
Clearly
\begin{align*}
\sum_{1\le k\le n}A_{p-k}^{++}=&\sum_{1\le k\le n}\#\{1\le x\le k-1: \(\frac{x}{p}\)=\(\frac{k-x}{p}\)=1\}\\
=&\sum_{1\le k\le n}\#\{(x,y): 1\le x,y\le k-1,\ x+y=k,\ \(\frac{x}{p}\)=\(\frac{y}{p}\)=1\}=r_p^*,
\end{align*}
where $r_p^*$ is defined as in Theorem \ref{Thm p=1 mod4}.
We also have the following identities.
\begin{align*}
\prod_{1\le k\le n}k^{\frac{p-1}{4}}k^{\frac{-1}{2}(1+(\frac{k}{p}))}
=&\(\frac{p-1}{2}!\)^{\frac{p-1}{4}}\prod_{\substack{1\le k\le n\\ (\frac{k}{p})=1}}k^{-1}\\
=&\(\frac{p-1}{2}!\)^{\frac{p-5}{4}}\times s_p.
\end{align*}
Hence we get
\begin{equation}\label{equation denominator}
\prod_{1\le i<j\le n}(a_j-a_i)\equiv (-1)^{r_p^*}\cdot\(\frac{p-1}{2}!\)^{\frac{p-5}{4}}\cdot s_p\pmod p.
\end{equation}
Combining (\ref{equation numerator}) and (\ref{equation denominator}), our result follows from
Lemma \ref{Chowla lemma}--\ref{K.S Williams lemma}. \qed
\medskip

Now we concentrate on Theorem \ref{Thm primitive root}. Let $\zeta=e^{2\pi i/(p-1)}$ . We obtain the following lemma.

\begin{lemma}\label{p-1 th root of unity}
Let $p>3$ be a prime. We have the identity
$$\prod_{1\le i<j \le n}(\zeta^{2j}-\zeta^{2i})=n^{n/2}e^{(3n^2-n-2)\pi i/4}.$$
In particular, if $p\equiv 5\pmod8$, then the product is equal to $n^{\frac{n}{2}}(-1)^{\frac{n-2}{4}}\in\Z$.
\end{lemma}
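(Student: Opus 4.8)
The plan is to evaluate the Vandermonde-type product
$\prod_{1\le i<j\le n}(\zeta^{2j}-\zeta^{2i})$ by first computing its square (or its absolute value) via the Vandermonde determinant formula, and then pinning down the argument (the sign/root-of-unity ambiguity) by a careful bookkeeping of the phases of each factor. Write $\eta=\zeta^2=e^{2\pi i/n}$ when $p=2n+1$; actually $\zeta^2=e^{4\pi i/(p-1)}=e^{2\pi i/n}$ is a primitive $n$-th root of unity. So the product in question is the Vandermonde product $\prod_{1\le i<j\le n}(\eta^j-\eta^i)$ over all $n$-th roots of unity $\eta^1,\dots,\eta^n=1$. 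The first step is the standard identity: for the $n$-th roots of unity, $\prod_{1\le i<j\le n}(\eta^j-\eta^i)^2 = (-1)^{n(n-1)/2}\,\mathrm{disc} = (-1)^{n(n-1)/2}(-1)^{n-1}n^n$ up to the usual normalization; more precisely one knows $\bigl|\prod_{i<j}(\eta^j-\eta^i)\bigr| = n^{n/2}$, so the product equals $n^{n/2}$ times a root of unity, and the whole content of the lemma is the determination of that root of unity as $e^{(3n^2-n-2)\pi i/4}$.

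Second, I would compute the argument directly. Each factor $\eta^j-\eta^i = e^{2\pi i i/n}\bigl(e^{2\pi i(j-i)/n}-1\bigr)$, and using $e^{i\theta}-1 = 2\sin(\theta/2)e^{i(\theta+\pi)/2}$ one gets
$\eta^j-\eta^i = 2\sin\!\bigl(\pi(j-i)/n\bigr)\,e^{i\pi(j-i)/n}\,e^{i\pi/2}\,e^{2\pi i i/n}$
(with the sine factor positive since $0<j-i<n$). Taking the product over $1\le i<j\le n$ accumulates: a real positive factor $\prod_{i<j}2\sin(\pi(j-i)/n) = n^{n/2}$ (this is the classical sine-product evaluation, equivalent to the discriminant computation above), times $e^{i\pi/2}$ raised to the number of pairs $\binom{n}{2}=n(n-1)/2$, times $e^{i\pi/n}$ raised to $\sum_{i<j}(j-i)$, times $e^{2\pi i/n}$ raised to $\sum_{i<j}i$. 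Then I would simplify the three integer exponents: $\sum_{1\le i<j\le n}(j-i) = \sum_{d=1}^{n-1}d(n-d)$ and $\sum_{1\le i<j\le n} i = \sum_{i=1}^{n-1} i(n-i)$ are both equal to $\binom{n+1}{3}=n(n^2-1)/6$. So the argument is
$\frac{\pi}{2}\cdot\frac{n(n-1)}{2} + \frac{\pi}{n}\cdot\frac{n(n^2-1)}{6} + \frac{2\pi}{n}\cdot\frac{n(n^2-1)}{6}\pmod{2\pi}$, i.e. $\frac{\pi n(n-1)}{4} + \frac{\pi(n^2-1)}{6} + \frac{\pi(n^2-1)}{3} = \frac{\pi n(n-1)}{4} + \frac{\pi(n^2-1)}{2}$. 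Reducing this modulo $2\pi$ and matching it against $(3n^2-n-2)\pi/4 = \frac{\pi n(n-1)}{4} + \frac{\pi(2n^2-2)}{4} = \frac{\pi n(n-1)}{4}+\frac{\pi(n^2-1)}{2}$ confirms the claimed value exactly — so the target exponent $(3n^2-n-2)/4$ is precisely $n(n-1)/4 + (n^2-1)/2$, which is what the phase count produces.

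Third, for the ``in particular'' clause: when $p\equiv 5\pmod 8$ we have $n=(p-1)/2\equiv 2\pmod 4$, so $n$ is even, write $n=2m$ with $m$ odd. Then $n^{n/2}=n^m$ is a (real, positive) rational integer, and I need $e^{(3n^2-n-2)\pi i/4}=\pm1$. Since $n\equiv 2\pmod 4$, $3n^2-n-2\equiv 12-2-2=8\equiv 0\pmod 4$? — here I must be careful: I would just substitute $n=2m$ to get $3n^2-n-2 = 12m^2-2m-2$, and compute $(12m^2-2m-2)/4 = 3m^2 - (m+1)/2$; with $m$ odd this is an integer, and modulo $2$ it equals $3m^2-(m+1)/2 \equiv m^2 + (m+1)/2 \equiv 1 + (m+1)/2\pmod 2$. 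One checks this has the same parity as $(n-2)/4 = (m-1)/2$ (both flip with $m\bmod 4$), giving $(-1)^{(n-2)/4}$ as claimed, and hence the product is $n^{n/2}(-1)^{(n-2)/4}\in\Z$.

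The main obstacle I anticipate is purely the phase arithmetic: getting every factor of $i$ and $\pi$ accounted for correctly — in particular being consistent about the branch of $e^{i\theta}-1 = 2\sin(\theta/2)e^{i(\pi+\theta)/2}$ (valid for $0<\theta<2\pi$, which holds here), and correctly evaluating the two combinatorial sums $\sum(j-i)$ and $\sum i$ over $1\le i<j\le n$. The real-positivity of the sine product $\prod_{1\le i<j\le n}2\sin(\pi(j-i)/n)=n^{n/2}$ should be quoted from the standard discriminant-of-$x^n-1$ computation rather than reproved. Everything else is routine once the three exponents are in hand and reduced modulo the appropriate period.
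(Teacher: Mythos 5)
Your proposal is correct and follows essentially the same route as the paper: the modulus $n^{n/2}$ is obtained from the square/discriminant of the Vandermonde product over the $n$-th roots of unity, and the root of unity is pinned down by summing the argument of each factor, your per-factor phase $\frac{\pi(i+j)}{n}+\frac{\pi}{2}$ being exactly the paper's $\frac{2\pi(i+j)}{p-1}+\frac{\pi}{2}$. Your parity verification in the case $p\equiv 5\pmod 8$ likewise matches the paper's reduction $\frac{3n^2-n-2}{4}\equiv\frac{n-2}{4}\pmod 2$.
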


\begin{proof}  Let $f(\zeta)=\prod_{1\le i<j\le n}(\zeta^{2j}-\zeta^{2i})$. Then
\begin{align*}
f(\zeta)^2=& (-1)^{n(n-1)/2}\cdot\prod_{1\le i \ne j \le n}(\zeta^{2j}-\zeta^{2i})\\
=& (-1)^{n(n-1)/2}\cdot\prod_{1\le j\le n}\prod_{i\ne j}(\zeta^{2j}-\zeta^{2i})\\
=& (-1)^{n(n-1)/2}\cdot\prod_{1\le j\le n}\frac{x^n-1}{x-\zeta^{2j}}|_{x=\zeta^{2j}}\\
=& (-1)^{n(n-1)/2}\cdot n^n\prod_{1\le j\le n}\zeta^{-2j}=n^n(-1)^{\frac{n^2+n+2}{2}}.
\end{align*}
Hence $\mid f(\zeta)\mid=n^{n/2}$. Next we consider the argument $\Arg(f(\zeta))$ of $f(\zeta)$. Since
$$\zeta^{2j}-\zeta^{2i}=\zeta^{i+j}(\zeta^{j-i}-\zeta^{i-j}),$$ we have
$$\Arg(\zeta^{2j}-\zeta^{2i})=\frac{2\pi(i+j)}{p-1}+\frac{\pi}{2}\sgn\(\sin(\frac{2\pi(j-i)}{p-1})\)=
\frac{2\pi}{p-1}(i+j)+\frac{\pi}{2}.$$
Hence
\begin{align*}
\Arg(f(\zeta))=&\sum_{1\le i<j\le n}\(\frac{2\pi}{p-1}(i+j)+\frac{\pi}{2}\)\\
=&\frac{n(n-1)\pi}{4}+\frac{2\pi}{p-1}\cdot\frac12\(\sum_{1\le i\le n}\sum_{1\le j\le n}(i+j)-\sum_{1\le i\le n}2i\)\\
\equiv & \frac{3n^2-n-2}{4}\pi\pmod {2\pi}.
\end{align*}
Thus $f(\zeta)=n^{n/2}e^{(3n^2-n-2)\pi i/4}$. If $p\equiv5\pmod8$, then $n\equiv 2\pmod4$.
Noting that $\frac{3n^2-n-2}{4}\in\Z$ and $\frac{3n^2-n-2}{4}\equiv \frac{n-2}{4}\pmod2$,
we have $f(\zeta)=n^{n/2}(-1)^{(n-2)/4}\in\Z.$
\end{proof}

\begin{remark}\label{Remark determinant}
Let $\chi$ and $g$ be as in Corollary \ref{determinant}. Let
$$ N_{p,g}=\left( \begin{matrix} \chi(g^2) & \chi(g^4) & \cdots & \chi(g^{p-1}) \\ \chi^2(g^2) & \chi^2(g^4) & \cdots &
\chi^2(g^{p-1}) \\ \vdots & \vdots & \ddots & \vdots \\ \chi^{\frac{p-1}{2}}(g^2) & \chi^{\frac{p-1}{2}}(g^4) & \cdots &
\chi^{\frac{p-1}{2}}(g^{p-1}) \\ \end{matrix} \right).$$
If $\chi(g)=\zeta$, then
$$\prod_{1\le i<j \le n}(\zeta^{2j}-\zeta^{2i})=-1\cdot\det(N_{p,g}).$$\
\end{remark}
\medskip

\noindent{\it Proof of Theorem}\ 1.2. Let $p\equiv 1\pmod 4$ be a prime, and let $g\in\Z$ be a primitive root modulo $p$. By definition we have
$$\sgn(\sigma_{0,2})=\prod_{1\le i<j\le n}\frac{\overline{j^2}-\overline{i^2}}{\overline{g^{2j}}-\overline{g^{2i}}}.$$
As in the proof of Theorem \ref{Thm p=1 mod4}, the numerator
\begin{equation}\label{equation numerator 2}
\prod_{1\le i<j\le n}(j^2-i^2)\equiv -n!\equiv -1\cdot\frac{(-1)^{(h(p)+1)/2}u_p}{2}\pmod p.
\end{equation}
We mainly focus on the denominator. We first observe the following fact.
Let $\Phi_{p-1}(x)$ be the $(p-1)$-th cyclotomic polynomial. Since $p>2$ and $p\equiv 1\pmod {p-1}$, it is known that
$p$ totally splits in $\Q(\zeta)$. Hence by Kummer's Theorem (cf. \cite[p.47 Proposition 8.3]{N}) we know that
$\Phi_{p-1}(x)\pmod {p\Z[x]}$ splits in $\Z/p\Z[x]$.
And the set of primitive $(p-1)$-th roots of unity of $\Q(\zeta)$ maps bijectively onto the set of primitive $(p-1)$-th roots of
unity of $\Z/p\Z$. Hence we have
$$\Phi_{p-1}(x)\equiv \prod_{\substack{1\le k\le p-1\\ \gcd(k,p-1)=1}}(x-g^k)\pmod {p\Z[x]}.$$
Now let $f(x)=\prod_{1\le i<j\le n}(x^{2j}-x^{2i})$. We may write
$$f(x)=g(x)\Phi_{p-1}(x)+h(x),$$
with $g(x),h(x)\in\Z[x]$ and $\deg(h(x))<\deg(\Phi_{p-1}(x))$. By Lemma \ref{p-1 th root of unity} we know that
$$h(\zeta)=n^{\frac{n}{2}}e^{(3n^2-n-2)\pi i/4}=n^{\frac{n}{2}}\zeta^{\frac{p-1}{4}\frac{3n^2-n-2}{2}}.$$
As $p\equiv1\pmod4$, we have $n^{\frac{n}{2}}\in\Z$.
Thus by Galois theory for each primitive $(p-1)$-th root of unity $\zeta_{p-1}$ we have
$$h(\zeta_{p-1})=n^{\frac{n}{2}}\cdot\zeta_{p-1}^{\frac{p-1}{4}\frac{3n^2-n-2}{2}}.$$

Let $$t(x)=n^{\frac{n}{2}}\cdot x^{\frac{p-1}{4}\frac{3n^2-n-2}{2}}\in\Z[x].$$
We obtain
$$\Phi_{p-1}(x)\mid h(x)-t(x).$$
Hence we have
\begin{equation}\label{equation denominator 2}
\prod_{1\le i<j\le n}(g^{2j}-g^{2i})=f(g)\equiv h(g)\equiv t(g)\equiv n^{\frac{n}{2}}\cdot g^{\frac{p-1}{4}\frac{3n^2-n-2}{2}}\pmod p.
\end{equation}
In particular, in view of the above, when $p\equiv 5\pmod 8$, for each primitive $(p-1)$-th root
of unity $\zeta_{p-1}$ we have
$$h(\zeta_{p-1})=n^{\frac{n}{2}}\cdot (-1)^{\frac{n-2}{4}}.$$
Hence $h(x)$ is the constant $n^{\frac{n}{2}}\cdot (-1)^{\frac{n-2}{4}}$.
The desired result follows from (\ref{equation numerator 2}) and (\ref{equation denominator 2}).\qed
\medskip

\noindent{\it Proof of Corollary}\ 1.1.
Let $M_p$ be as in Corollary \ref{determinant}, and let $N_{p,g}$ be as in Remark \ref{Remark determinant}.
When $p\equiv 5\pmod 8$, it is easy to see that $\det(N_{p,g})$ is independent on the choice of $g$.
Then
$$\det(M_p)=\sgn(\sigma_{0,2})\cdot\det(N_{p,g}).$$
Our desired result follows from Lemma \ref{p-1 th root of unity} and Theorem \ref{Thm primitive root}.\qed
\maketitle

\Ack\ We are exceedingly thankful for the careful reading and indispensable suggestions of the anonymous referees. We also thank Prof. Z.-W Sun and Prof. Hao Pan for their help suggestions.

This research was supported by the National Natural Science Foundation of
China (Grant No. 11571162).

\end{document}